\DeclareMathOperator{\Cov}{Cov}
\newcommand{\<}{\left\langle}
\renewcommand{\>}{\right\rangle}
\newcommand{\Z}{\mathbf{Z}}
\newcommand{\R}{\mathbf{R}}
\definecolor{my-link}{rgb}{0.5,0.0,0.0}
\definecolor{my-blue}{rgb}{0.0,0.0,0.6}
\definecolor{my-red}{rgb}{0.5,0.0,0.0}
\definecolor{my-green}{rgb}{0.0,0.5,0.0}
\definecolor{darkgreen}{rgb}{0.2,0.45,0}
\definecolor{nicos-red}{rgb}{0.65,0.0,0.0}
\definecolor{light-gray}{gray}{0.6}
\definecolor{really-light-gray}{gray}{0.8}
\newcommand{\be}{\begin{equation}}
\newcommand{\ee}{\end{equation}}
\newcommand{\lip}{\text{\rm Lip}}
\renewcommand{\P}{\mathrm{P}}
\newcommand{\E}{\mathrm{E}}
\newcommand{\F}{{\bf F}}
\renewcommand{\d}{{\rm d}}
\newcommand{\e}{{\rm e}}
\renewcommand{\ge}{\geqslant}
\renewcommand{\le}{\leqslant}
\author{Le Chen\\Univ.\ Utah
\and Michael Cranston\\U.C. Irvine
\and Davar Khoshnevisan\\Univ.\ Utah
\and Kunwoo Kim\\Univ.\ Utah}
\title{Dissipation and high disorder\thanks{
	Research supported in part by the Swiss Federal Fellowship  (L.C.)
	and the NSF grant DMS-1307470 (D.K.).}}
\date{Version: November 11, 2014}
\newtheorem{stat}{Statement}[section]
\newtheorem{theorem}[stat]{Theorem}
\newtheorem{lemma}[stat]{Lemma}
\theoremstyle{definition} 
\newtheorem*{definition}{Definition}
\newtheorem*{cremark}{Remark}
\numberwithin{equation}{section}
\begin{document}%\onehalfspacing
\maketitle
\begin{abstract} 
 Given a field $\{B(x)\}_{x\in\Z^d}$ of independent standard Brownian motions, indexed by $\Z^d$, the generator of a suitable Markov process on $\Z^d,\,\,\mathcal{G},$ and sufficiently nice function $\sigma:[0,\infty)\to[0,\infty),$
we consider the influence of the parameter $\lambda$ on the behavior of the system,
\begin{eqnarray*}
	\d u_t(x) = & (\mathcal{G}u_t)(x)\,\d t +  \lambda\sigma(u_t(x))\d B_t(x)
	\qquad[t>0,\ x\in\Z^d],\\ 
	&u_0(x)=c_0\delta_0(x),
\end{eqnarray*}
 We show that for any $\lambda>0$ in dimensions one and two the total mass $\sum_{x\in\Z^d}u_t(x)\to 0$ as $t\to\infty$ while for dimensions greater than two there is a phase transition point $\lambda_c\in(0,\infty)$ such that for $\lambda>\lambda_c,\,  \sum_{\Z^d}u_t(x)\to 0$ as $t\to\infty$ while for $\lambda<\lambda_c,\,\sum_{\Z^d}u_t(x)\not\to 0$ as $t\to\infty.$ 

\noindent{\it Keywords:} Parabolic Anderson model, strong disorder, stochastic partial differential equations.\\

	\noindent{\it \noindent AMS 2010 subject classification:}
	Primary: 60J60, 60K35, 60K37,  Secondary: 47B80, 60H25 
\end{abstract}%\newpage

\section{Introduction}

Let $\tau$ denote a probability density function on $\Z^d$, and consider the
linear operator $\mathcal{G}$ defined by
\begin{equation}\label{G}
	(\mathcal{G}h)(x) = \sum_{y\in\Z^d}[h(x+y)-h(x)]\tau(y),
\end{equation}
for all $x\in\Z^d$ and bounded functions $h:\Z^d\to\R$. We may think of
$\mathcal{G}$ as the generator of a rate-one
continuous-time random walk $\bm{X}:=\{X_t\}_{t\ge0}$ on $\Z^d$% 
---more correctly put, a compound Poisson process $X$---such that
$X_0=0$ and  $\tau(x)=\P\{X_J=x\,, J<\infty\}$ for all $x\in\Z^d$,
where $J$ denotes the first time the process $X$ jumps. In order to rule out
trivialities, we will assume that $X$ is genuinely $d$-dimensional. In particular,
$J<\infty$ a.s.\ and $\tau(x)=\P\{X_J=x\}$.

Let $\{B(x)\}_{x\in\Z^d}$
denote a field of independent standard Brownian motions, indexed by $\Z^d$,
and consider the system of It\^o stochastic ODEs,
\begin{equation}\label{SHE}
	\d u_t(x) =  (\mathcal{G}u_t)(x)\,\d t +  \lambda\sigma(u_t(x))\d B_t(x)
	\qquad[t>0,\ x\in\Z^d],
\end{equation}
subject to $u_0(x):= c_0\delta_0(x)$ for all $x\in\Z^d$, where $c_0,\lambda>0$
are finite and non random numerical quantities. We will think
of the number $c_0$ as fixed, and of $\lambda$ as a tuning parameter which
describes the \emph{level of the noise}.

Here and throughout we assume that $\sigma:\R\to\R$ is a deterministic
Lipschitz-continuous function. It follows from the work of Shiga and Shimizu
\cite{ShigaShimizu} that the particle system \eqref{SHE} has a unique strong solution.

We plan to study the solution to \eqref{SHE} under further mild restrictions on
the operator $\mathcal{G}$ and the nonlinearity $\sigma$.
Regarding $\mathcal{G}$, we always assume that $\tau$ has 
mean zero and compact support; the latter is equivalent to 
the notion that $\mathcal{G}$ is finite range. To summarize, we have
\begin{equation}\label{EM}
	\sum_{x\in\Z^d} x_j\tau(x)=0\quad\text{for all $1\le j\le d$}
	\quad\text{and}\quad
	\max_{\|x\|>R_0}\tau(x)=0
\end{equation}
for some $R_0\in(1\,,\infty)$.
In order to rule out trivialities, we assume also that $\tau(0)<1$.
Otherwise, \eqref{SHE} describes a countable family of independent
and/or noninteracting one-dimensional It\^o diffusions. It is easy to see
that the best-studied example of \eqref{SHE} is included here; that is
the case where $\mathcal{G}$ is the discrete Laplacian,
$(\mathcal{G}h)(x)=(2d)^{-1}\sum_{y\in\Z^d:\, |y-x|=1} h(y)$
where $|z|:=\sum_{j=1}^d|z_j|$ for all $z\in\Z^d$.
Other examples abound.

As regards the nonlinearity, we will alway assume that 
\begin{equation}\label{sig}
	\sigma(0)=0\quad\text{and}\quad
	{\rm L}_\sigma:=\inf_{z\in\R}|\sigma(z)/z|>0.
\end{equation}
The first part of this condition ensures that the solution $u$ to \eqref{SHE}
is ``physical.'' More precisely, the strict inequality
$u_t(x)> 0$ holds for all $t> 0$ and $x\in\Z^d$ almost surely; see Georgiou
et al \cite[Lemma 7.1]{GJKS}.
The second is a ``moment intermittency condition'' \cite{FK,Shiga1992}.

The \emph{parabolic Anderson model} $\sigma(u)= u$ has been studied a great deal
(see Carmona and Molchanov \cite{CarmonaMolchanov})
in part because it arises
naturally in other disciplines, and also because it is close to being
an exactly-solvable model. In fact, in a few cases, it is exactly solvable;
see Borodin and Corwin \cite{BorodinCorwin}.

Thanks to a comparison argument \cite[Theorem 5.1]{GJKS},
Theorem 1.2 of Shiga \cite{Shiga1992} implies that there exists a number 
$\lambda_1>0$ such that 
\begin{equation}
	\lim_{t\to\infty} u_t(x)=0\qquad\text{a.s.\ for all $x\in\Z^d$},
\end{equation}
if $\lambda>\lambda_1$. 
One can recast this, somewhat informally, as the assertion that the solution to
\eqref{SHE} is locally dissipative under \emph{strong disorder}; see
Carmona and Hu \cite{CU} for the terminology on strong vs.\ weak disorder.

On the other hand, the theory of
Georgiou et al \cite{GJKS} implies that if $d\ge 3$, then there exists a finite
and positive number $\lambda_2$ such that
\begin{equation}
	\lim_{t\to\infty} \sup_{x\in\Z^d}u_t(x)=0\qquad\text{ a.s.,}
\end{equation}
whenever $\lambda\in(0\,,\lambda_2)$.
This implies that the solution to
\eqref{SHE} is uniformly---hence also locally---dissipative under \emph{weak disorder}.

Finally, let us mention that when there is no disorder, that is when $\sigma\equiv 0$, the
solution to the Kolmogorov--Fokker--Planck equation
\eqref{SHE} is simply $u_t(x) = \P\{X_t=-x\}$, which goes to zero uniformly
in $x$ as $t\to\infty$
thanks to a suitable form of the local central limit theorem.

Thus, we see that
{\it local} dissipation is a generic property of \eqref{SHE}, regardless
of the strength of the disorder in \eqref{SHE}.
By contrast, the main result of this  paper shows that
{\it global} dissipation essentially characterizes the presence of strong disorder.
In order to describe our result, consider the \emph{total mass process}
\[
	m_t(\lambda) := \|u_t\|_{\ell^1(\Z^d)} := \sum_{x\in\Z^d}
	|u_t(x)|\qquad[t\ge0].
\]
It is well known that $t\mapsto m_t(\lambda)$ is a mean-$c_0$
continuous $L^2(\P)$-martingale. As far as we know,
a variation on this observation goes on one hand at least as far back as Spitzer's paper 
\cite[Proposition 2.3]{Spitzer1981}
on discrete [more-or-less linear] interacting particle systems. More closely-related
variations can be found in the
literature on measure-valued diffusions (see  Dawson and Perkins \cite{DawsonPerkins}
for pointers to the literature).
The particular case
that we need follows from \eqref{mild} below and the fact that
$m_t(\lambda)>0$ for all $t>0$, a.s.
The asserted positivity follows from
Lemma 7.1 of Georgiou et al \cite{GJKS} which implies that
\begin{equation}\label{eq:positivity:u}
	u_t(x)>0\qquad\text{for all $x\in\Z^d,\,t>0$, almost surely.}
\end{equation}
Owing to the martingale convergence
theorem, one consequence of positivity is that
\begin{equation}\label{m_infty}
	m_\infty(\lambda) := \lim_{t\to\infty} m_t(\lambda)
\end{equation}
exists a.s.\ and is finite a.s. for all $\lambda>0$.

\begin{definition}
	We say that \eqref{SHE} is \emph{globally dissipative} if
	$m_\infty(\lambda)=0$ a.s.
\end{definition}

Frequently, the probability literature refers to this property as
``extinction.'' We prefer ``dissipation'' because a correct interpretation
of ``extinction,'' in the present setting, might suggest the false
claim that $m_t(\lambda)=0$ a.s.\
for all $t$ sufficiently large, since as mentioned above, the strict inequality
$u_t(x)> 0$ holds for all $t> 0$ and $x\in\Z^d$ almost surely. 

%witness \eqref{eq:positivity:u}.

The principle result of this paper is the following, which essentially
equates global dissipation with the presence of strong disorder.

\begin{theorem}\label{th:main}
	In recurrent dimensions $[d=1,2]$, the system \eqref{SHE}
	is always globally dissipative. In transient dimensions
	$[d\ge 3]$ there is a sharp phase transition; namely,
	there exists a nonrandom number $\lambda_c\in(0\,,\infty)$
	such that \eqref{SHE} is globally dissipative if $\lambda>\lambda_c$
	and not globally dissipative if $\lambda\in(0\,,\lambda_c)$.
\end{theorem}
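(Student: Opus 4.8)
My plan is to work with the mild formulation of \eqref{SHE}. Writing $p_t(x):=\P\{X_t=x\}$ for the heat kernel of $\mathcal{G}$, the solution satisfies
\begin{equation}\label{mild}
	u_t(x) = c_0 p_t(-x) + \lambda\sum_{y\in\Z^d}\int_0^t p_{t-s}(x-y)\sigma(u_s(y))\,\d B_s(y).
\end{equation}
Summing over $x$ and using the fact that $\sum_x p_{t-s}(x-y)=1$, the total mass $m_t(\lambda)$ is a nonnegative $L^2(\P)$-martingale with $\E[m_t(\lambda)]=c_0$ and quadratic variation
\begin{equation*}
	\langle m(\lambda)\rangle_t = \lambda^2\int_0^t\sum_{y\in\Z^d}\sigma(u_s(y))^2\,\d s.
\end{equation*}
Since $m_\infty(\lambda)$ exists a.s.\ and is finite, the martingale is $L^1$-bounded; it converges in $L^1$, and hence $\E[m_\infty(\lambda)]=c_0$ is \emph{not} an obstacle to dissipation — rather, $m_\infty(\lambda)=0$ a.s.\ is equivalent to failure of $L^1$-uniform-integrability, which one detects via a second-moment / size-biasing argument. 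The standard route is to compute $\E[m_t(\lambda)^2]$ in closed form: by It\^o's isometry together with $\sigma(z)^2\ge \Lp_\sigma^2 z^2$ and $\sigma(z)^2\le \mathrm{Lip}_\sigma^2 z^2$ (from \eqref{sig} and Lipschitz continuity), one gets two-sided bounds
\begin{equation*}
	\E[m_t(\lambda)^2] \lessgtr c_0^2 + \lambda^2 (\text{const})\int_0^t \sum_{x\in\Z^d} \E[u_s(x)^2]\,\d s,
\end{equation*}
and the pointwise second moment $\E[u_s(x)^2]$ is governed by a renewal-type inequality whose kernel involves $(p\ast p)_t(0)=\P\{X_{2t}=0\}$ — i.e.\ the \emph{return probability} of the walk. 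This is exactly where the recurrence/transience dichotomy enters.

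First I would set up the size-biased measure (Palm measure) associated with the nonnegative martingale $m_t(\lambda)/c_0$. Under size-biasing one expects the spatial profile of $u$ to be described by the solution with an extra random walk path attached — concretely, the relevant quantity becomes an expectation over two independent copies $X,X'$ of the walk of the form $\E\exp\!\big(\lambda^2(\text{const})\,L_\infty\big)$ where $L_\infty=\int_0^\infty \1\{X_s=X'_s\}\,\d s$ is the total collision local time of two independent rate-one walks, equivalently the occupation time at $0$ of $X-X'$. The key dichotomy: in $d=1,2$ the difference walk is recurrent, so $L_\infty=\infty$ a.s.; in $d\ge3$ it is transient and $L_\infty<\infty$ a.s., with $\E e^{\theta L_\infty}<\infty$ iff $\theta$ is below a finite threshold $\theta^\ast$ (since $L_\infty$ is, up to the geometric number of returns, an exponentially-integrable geometric-type sum). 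Uniform integrability of the martingale holds iff this exponential moment is finite, giving: (i) in $d=1,2$, $m_\infty(\lambda)=0$ a.s.\ for every $\lambda>0$; (ii) in $d\ge3$, defining $\lambda_c$ by $\lambda_c^2\,(\text{const})=\theta^\ast$, the martingale is $L^2$-bounded (hence UI, hence $m_\infty>0$ on an event of positive probability, then $=c_0$ in expectation) for $\lambda<\lambda_c$, and dissipative for $\lambda>\lambda_c$. To upgrade ``$m_\infty(\lambda)>0$ with positive probability'' to ``$m_\infty(\lambda)>0$ a.s.'' I would use a $0$--$1$ argument: $\{m_\infty(\lambda)=0\}$ is a tail event for the driving noise, or alternatively use the Markov property of $t\mapsto u_t$ together with the strict positivity \eqref{eq:positivity:u} to show the event has probability $0$ or $1$.

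The steps, in order: (1) derive the mild formulation and the martingale/quadratic-variation identities for $m_t(\lambda)$; (2) via It\^o isometry and \eqref{sig}, obtain matching upper and lower renewal inequalities for $\sum_x\E[u_t(x)^2]$ with kernel $\P\{X_{2t}=0\}$; (3) translate these, through a size-biasing/Feynman--Kac representation, into the condition $\E\exp(\lambda^2 c\, L_\infty)<\infty$, and analyze $L_\infty$ (the collision local time of two independent copies of $X$) using recurrence in $d\le2$ and transience plus the geometric structure of returns in $d\ge3$; (4) conclude UI $\Leftrightarrow$ non-dissipation, identify $\lambda_c$, and prove monotonicity in $\lambda$ (so that the set of non-dissipative $\lambda$ is an interval $(0,\lambda_c)$) — monotonicity should follow from a coupling/comparison argument in the spirit of \cite[Theorem 5.1]{GJKS}; (5) promote the positive-probability statement to an almost-sure one via a $0$--$1$ law. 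The main obstacle I anticipate is step (3): making the size-biasing rigorous in this discrete SPDE setting and showing that the two-sided second-moment bounds genuinely transfer to a two-sided characterization of uniform integrability (the upper bound gives $L^2$-boundedness hence non-dissipation for small $\lambda$; the lower bound must be leveraged — e.g.\ through a conditional second-moment / Paley--Zygmund argument, or by showing $\sup_t\E[m_t(\lambda)\log m_t(\lambda)]=\infty$ forces $m_\infty=0$ — to get dissipation for large $\lambda$, and this direction is the delicate one, especially pinning down that the \emph{same} critical $\lambda_c$ works for both directions, which is where finite range of $\tau$ and mean zero are used to get sharp heat-kernel asymptotics $\P\{X_{2t}=0\}\asymp t^{-d/2}$).
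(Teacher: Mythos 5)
Your martingale set-up, the monotonicity-in-$\lambda$ step via comparison, and the small-$\lambda$ argument are all sound and close to what the paper does: the subcritical phase is indeed handled by an $L^2$ bound whose convergence condition is $\lambda^2\lip_\sigma^2\Upsilon(0)<1$, with $\Upsilon(0)$ the expected total collision time of two independent copies of the walk, followed by Paley--Zygmund; and the existence of a single $\lambda_c$ comes from the Cox--Fleischmann--Greven comparison theorem, just as you propose. One small simplification: ``not globally dissipative'' only requires $\P\{m_\infty(\lambda)>0\}>0$, so your step (5), the $0$--$1$ law upgrade, is not needed --- which is fortunate, because $\{m_\infty(\lambda)=0\}$ is not a tail event in any obvious sense here.

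The genuine gap is the claimed equivalence ``$m_\infty(\lambda)=0$ a.s.\ $\Leftrightarrow$ failure of uniform integrability.'' For a nonnegative martingale only one implication holds: UI implies $\E[m_\infty(\lambda)]=c_0>0$, hence non-dissipation. The converse is false in general --- a non-UI nonnegative martingale can perfectly well have a strictly positive limit with positive probability --- and it is precisely the converse that you need for both dissipation claims ($d=1,2$ for all $\lambda$, and $d\ge3$ for large $\lambda$). For the linear model $\sigma(u)=u$ one can sometimes rescue this via the multiplicative structure of the partition-function martingale (as in the directed-polymer literature), but for a general Lipschitz $\sigma$ with only the two-sided bound $\Lp_\sigma|z|\le|\sigma(z)|\le\lip_\sigma|z|$ the size-biasing/Feynman--Kac identity for $\E[m_t(\lambda)^2]$ is no longer exact; and even granting it, exploding second moments do not preclude UI. The paper closes this gap by entirely different means: in $d=1,2$ it runs a fractional-moment argument --- It\^o's formula applied to $[m_t(\lambda)]^\eta$, $\eta\in(0,1)$, produces a differential inequality for $f(t):=\E([m_t(\lambda)]^\eta)$ involving $R_t:=\|u_t\|_{\ell^2(\Z^d)}/\|u_t\|_{\ell^1(\Z^d)}$, and a Cauchy--Schwarz spatial-concentration bound on $R_t$ combined with the random-walk tail estimate (Lemma \ref{lem:tails}) forces $f(t)\to0$ at an explicit rate via Lemmas \ref{lem:delta<2} and \ref{lem:delta=2}; in $d\ge3$ with $\lambda$ large it invokes Shiga's almost-sure negative Lyapunov exponent, i.e.\ $\sup_x\P\{u_t(x)>\e^{-t/c}\}\le c\e^{-t/c}$, plus a tail bound on the mass outside a box of radius $t^2$, to get $m_t(\lambda)\to0$ in probability. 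Without some substitute for these two arguments your proposal does not prove dissipation in either regime.
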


\begin{cremark}
	The case $\lambda=\lambda_c$ is open.
\end{cremark}

Theorem \ref{th:main} is a qualitative result, but its proof has
some quantitative aspects as well. In particular, as part of
the proof, we will demonstrate that there exists a finite random
variable $V:=V(\lambda,\sigma,c_0,d)$ and a nonrandom
positive and finite constant $v=v(\lambda,\sigma,c_0,d)$ such that
\begin{equation}\label{eq:quant}
	m_t(\lambda) \le V\times\begin{cases}
		\exp\left( -v t^{1/3} \right)&\text{if $d=1$},\\
		\exp\left( -v\sqrt{\log t}\right)&\text{if $d=2$},
	\end{cases}
\end{equation} 
almost surely for all $t>1$. We do not know if these bounds are sharp, only that
\begin{equation}\label{eq:quant:LB}
	\limsup_{t\to\infty} \frac1t \log m_t(\lambda) >-\infty,
\end{equation}
with positive probability in all dimensions $d\ge 1$ and for all $\lambda>0$.
However, our methods are in some sense robust:
We will prove that some aspects of  \eqref{eq:quant} can be carried out in the
continuous setting of stochastic partial differential equations as well
[see Theorem \ref{th:decay:SHE}].

Let us conclude the Introduction with a few remarks about the literature.

In the case that $d=1,2$, the qualititative part of
Theorem \ref{th:main} is a part of the folklore of the subject,
and follows from well-known
ideas about linear interacting particle systems---see for example
Liggett \cite[Theorem 4.5, p.\ 451]{Liggett} and
especially Shiga \cite[Remark 4]{Shiga1992}. 

Shiga \cite[p.\ 793]{Shiga1992} asserts that ``it is plausible
that the extinction occurs'' when $d\ge 3$. The transient-dimension portion
of Theorem \ref{th:main} disproves Shiga's prediction
when the noise level is sufficiently low. In the language of interacting particle systems, 
Theorem \ref{th:main} implies the ``survival'' of the solution to \eqref{SHE}
in transient dimensions when $\lambda$ is small. Our
method of proof of system survival is quite different from the more familiar
ergodic-theoretic ones and worthy of attention in its own right; 
e.g., compare with Liggett \cite[Chapter IX, \S2]{Liggett}.

Throughout, $\lip_\sigma$ denotes the optimal Lipschitz constant of
$\sigma$; that is,
\begin{equation}
	\lip_\sigma := \sup_{-\infty<x<y<\infty} \left| \frac{\sigma(x)-\sigma(y)}{
	x-y}\right|.
\end{equation}
Of course, $\lip_\sigma<\infty$ by default.

\section{Some technical estimates}
In this section we record three elementary technical facts that we will
soon need. One [Lemma \ref{lem:tails}] is a
variation on very well-known large deviations estimates for L\'evy processes.
The other two  [Lemmas \ref{lem:delta<2} and Lemma \ref{lem:delta=2}]
contain extremal bounds on subsolutions to a certain infinite family of
differential equations.

Let $Y_1,Y_2,\ldots$ be i.i.d.\ random variables in $\Z^d$
such that $\P\{Y_1=x\}=\tau(x)$ for all $x\in\Z^d$. In particular,
$Y_1$ has mean zero and  moment generating function
\begin{equation}
	\varphi(z):=\E\exp(z\cdot Y_1),
\end{equation}
that is finite for all $z$ in an open neighborhood of the origin of $\R^d$.

Let $N:=\{N(t)\}_{t\ge 0}$ denote an independent rate-one
Poisson process, and consider the compound Poisson process
[sometimes also called continuous-time random walk],
\begin{equation}
	X_t := \sum_{j=1}^{N(t)} Y_j\qquad[t\ge0],
\end{equation}
where $\sum_{j=1}^0Y_j :=0$. Clearly, $\{X_t\}_{t\ge0}$ is a L\'evy process
on $\Z^d$ whose generator $\mathcal{G}$ is defined in \eqref{G}.

\begin{lemma}\label{lem:tails}
	Under the preceding conditions, for every $q\in(0\,,\infty)$
	there exists  $c\in(0\,,\infty)$ such that
	\begin{equation}\label{LD}
		\P\left\{\|X_t\|>K\right\} \le 2d\exp\left(-cK^2/t\right),
	\end{equation}
	uniformly for all $t\ge1$ and $K\in[0\,,qt]$.
\end{lemma}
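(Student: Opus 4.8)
The plan is to prove the tail bound \eqref{LD} by a standard exponential-Chebyshev (Cram\'er) argument, carefully tracking uniformity over the range $K\in[0,qt]$. First I would fix a direction: since $\|X_t\| = \max_{1\le j\le d}|X_t^{(j)}|$ up to the usual comparison of norms on $\R^d$ (and since $\{\|X_t\|>K\}\subseteq\bigcup_{j=1}^d\{|X_t^{(j)}|>K/\sqrt d\}$ or a similar inclusion, which is where the factor $2d$ comes from — $d$ coordinates, two signs each), it suffices to bound $\P\{X_t^{(j)}>K'\}$ for each coordinate $j$ and a rescaled threshold $K'$ comparable to $K$. For a single coordinate, write $\psi(\theta):=\log\E\exp(\theta Y_1^{(j)})=\log\varphi(\theta e_j)$, which is finite on a neighborhood of $0$ because $\tau$ has compact support; indeed since $\tau$ is finitely supported, $\psi$ is real-analytic on all of $\R$. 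The compound Poisson structure gives $\E\exp(\theta X_t^{(j)}) = \exp\bigl(t(\e^{\psi(\theta)}-1)\bigr)$, so $\P\{X_t^{(j)}>K'\}\le \exp\bigl(-\theta K' + t(\e^{\psi(\theta)}-1)\bigr)$ for every $\theta>0$.

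Next I would choose $\theta$ to make this bound genuinely Gaussian-type $\exp(-cK^2/t)$ over the stated range. Because $Y_1$ has mean zero, $\psi(0)=\psi'(0)=0$, so near the origin $\e^{\psi(\theta)}-1 = \psi(\theta)+O(\psi(\theta)^2) = \tfrac12\sigma_j^2\theta^2 + O(\theta^3)$, where $\sigma_j^2=\psi''(0)=\Var(Y_1^{(j)})$. Hence there is a small $\theta_0>0$ and a constant $C$ with $\e^{\psi(\theta)}-1\le C\theta^2$ for all $|\theta|\le\theta_0$. The optimization then splits into two regimes. When $K'/t\le \theta_0$ (equivalently $K'\le\theta_0 t$, the "moderate deviations" window), I take $\theta = aK'/t$ for a small fixed $a\le\theta_0$; then the exponent is $\le -aK'^2/t + Ca^2 K'^2/t = -(a-Ca^2)K'^2/t$, which is $-c_1 K'^2/t$ for $a$ small enough. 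When $K'/t>\theta_0$ — which, since $K\le qt$ forces $K'\le (q/\sqrt d)\, t$, happens only in the bounded window $\theta_0 < K'/t \le q/\sqrt d$ — I simply take $\theta=\theta_0$ fixed; the exponent is $-\theta_0 K' + t(\e^{\psi(\theta_0)}-1) \le -\theta_0 K' + C\theta_0^2 t \le -\theta_0 K'(1 - C\theta_0^2 t/K')$ and using $K'/t>\theta_0$ this is $\le -\theta_0(1-C\theta_0)K' \le -c_2 K'$ for $\theta_0$ small, and finally $K' \ge \theta_0 t \ge (\theta_0\sqrt d/q)\cdot (K'/t)\cdot K'$ up to constants... more cleanly: on this window $K'^2/t \le (q/\sqrt d) K'$, so $-c_2 K' \le -c_2(\sqrt d/q) K'^2/t$, recovering the desired form. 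Combining the two regimes and relabelling constants (and folding in the norm-comparison constant) yields $\P\{X_t^{(j)}>K'\}\le \exp(-c K^2/t)$ with $c$ depending only on $\tau$, $d$, and $q$; summing over the $2d$ half-spaces gives \eqref{LD}. The restriction $t\ge 1$ is only used to absorb lower-order terms cleanly and is otherwise harmless.

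The main obstacle — really the only subtle point — is maintaining uniformity in $t$ and $K$ over the whole range $[0,qt]$ with a single constant $c$: a naive choice $\theta=K'/(\sigma_j^2 t)$ is optimal only when $K'/t$ is small, and fails (leaves $\psi$'s domain, or $\e^{\psi}-1$ is no longer $\le C\theta^2$) once $K'$ is a fixed fraction of $t$. The fix, as above, is precisely the two-regime split, with the crucial observation that the upper endpoint $qt$ of the range is \emph{linear} in $t$, so the "bad" regime $K'\gtrsim t$ is a \emph{bounded} window $\theta_0 \lesssim K'/t \lesssim q$ on which the crude fixed-$\theta_0$ bound already gives linear-in-$K'$ decay, and linear decay dominates $\exp(-cK^2/t)$ there because $K^2/t \asymp K$. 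Everything else is routine: finiteness of $\varphi$ near $0$ is given, $\psi'(0)=0$ is the mean-zero assumption \eqref{EM}, and the compound-Poisson moment generating function identity is standard.
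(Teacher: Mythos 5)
Your argument is correct, and it takes a different (self-contained) route from the one the paper indicates. The paper does not write out a proof: it remarks that the lemma ``can be obtained from Hoeffding's inequality by first conditioning on $N(t)$,'' i.e.\ one bounds $\P\{|X_t^{(j)}|>K'\mid N(t)=n\}\le 2\exp(-K'^2/(2nR_0^2))$ using the boundedness of the jumps (compact support of $\tau$), and then averages over the Poisson law of $N(t)$; the restriction $K\le qt$ enters there because the contribution of the event $\{N(t)\ge\text{const}\cdot t\}$ is only $\e^{-ct}$, which dominates $\e^{-cK^2/t}$ precisely when $K\lesssim t$. You instead run Chernoff directly on the unconditional moment generating function $\E\exp(\theta X_t^{(j)})=\exp(t(\e^{\psi(\theta)}-1))$ and split the optimization over $\theta$ into the windows $K'/t\le\theta_0$ and $\theta_0<K'/t\le q/\sqrt d$; the key observation that the second window is bounded (because the upper endpoint $qt$ is linear in $t$), so that linear-in-$K'$ decay already implies $\exp(-cK^2/t)$ there, is exactly the analogue of the Poisson-tail step in the Hoeffding route. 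Your version has the minor advantages of needing only finiteness of $\varphi$ near the origin rather than bounded jumps, and of producing the constant $c$ explicitly in terms of $\theta_0$, $q$, and $\Var(Y_1^{(j)})$; the Hoeffding route is shorter if one is willing to quote Hoeffding and a Poisson large-deviation bound. One small bookkeeping point in your write-up: the constant $C$ in the bound $\e^{\psi(\theta)}-1\le C\theta^2$ depends on the neighborhood $[-\theta_0,\theta_0]$ on which you apply Taylor's theorem, while you later shrink $\theta_0$ so that $C\theta_0<1$; this is harmless (fix a neighborhood first, extract $C$, then shrink $\theta_0$ within it), but should be said in that order. Also, as you note, $t\ge1$ is not actually needed in your argument.
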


%\begin{cremark}
%	It is easy to see that ``$t\ge1$'' can be improved to
%	``$t\ge\delta$,'' for any fixed choice of $\delta>0$, after we adjust the constant
%	$c$ appropriately. However,
%	the assertion is false for all $t>0$; this  can be seen as follows:
%	Since $X_t\in\Z^d$,
%	\begin{equation}\begin{split}
%		\P\{\|X_t\|>0\} &\ge \P\{Y_1\neq 0\}\P\{N_t=1\}\\
%		&= \kappa[1-\tau(0)]t (1+o(1))\qquad[t\downarrow0].
%	\end{split}\end{equation}
%\end{cremark}

Lemma \ref{lem:tails} is basically a version of Hoeffding's inequality \cite{Hoeffding}
in continuous time, and can be obtained from Hoeffding's inequality
by first conditioning on $N(t)$.
Next we describe the second, more analytic, portion of this section.

Choose and fix $\alpha,\delta,\gamma>0$, and define 
$\F(\alpha\,,\delta\,,\gamma)$ to be the collection of
all non-negative continuously-differentiable functions $f:\R_+\to\R_+$
such that
\begin{equation}\label{eq:DE}
	f'(t) \le -\alpha
	\sup_{K\in[a,bt]}\left[ \frac{f(t) - \exp\left(-\gamma K^2/t\right)}{K^\delta}\right]
	\qquad\text{for all $t\ge1$},
\end{equation}
and some $0<a<b$. We will reserve the notation
$\F(\alpha\,,\delta\,,\gamma)$ as this function class throughout the
paper.

Suppose $f\in \F(\alpha\,,\delta\,,\gamma)$ for some finite
numbers $\alpha,\delta,\gamma>0$. Because $f(t)\ge 0$ 
for all $t>0$, we can set $K:=bt$ in the optimization problem that defines
$\F(\alpha\,,\delta\,,\gamma)$ in order to conclude that
\begin{equation}
	f'(t) \le \alpha(bt)^{-\delta}\exp(-\gamma b^2 t)
	\qquad\text{for all $t\ge 1$.}
\end{equation}
Consequently, $f$ is bounded. The following
gives a strong improvement in the case that $\delta<2$.

\begin{lemma}\label{lem:delta<2}
	For every $\delta\in[0\,,2)$, $\alpha,\gamma>0$, and
	$f\in\F(\alpha\,,\delta\,,\gamma)$,
	\begin{equation}
		\limsup_{t\to\infty} 
		\frac{\log f(t)}{t^\nu}<0,\quad
		\text{with $\nu:=\frac{2-\delta}{2+\delta}$}.
	\end{equation}
\end{lemma}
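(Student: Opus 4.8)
The plan is to extract from the differential inequality~\eqref{eq:DE} a genuinely useful autonomous differential inequality for $f$, by choosing the free parameter $K$ in the supremum in a $t$-dependent but near-optimal way. Fix $f\in\F(\alpha,\delta,\gamma)$ and recall we already know $f$ is bounded, say $f(t)\le M$ for all $t\ge 1$. The key observation is that for any admissible $K\in[a,bt]$ we have
\[
	f'(t)\le -\alpha\,\frac{f(t)-\exp(-\gamma K^2/t)}{K^\delta},
\]
so if we pick $K=K(t)$ so that $\exp(-\gamma K^2/t)=\tfrac12 f(t)$ — which is possible as long as the resulting $K$ lies in $[a,bt]$ — then $f'(t)\le -\tfrac{\alpha}{2}f(t)K(t)^{-\delta}$. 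Solving $\exp(-\gamma K^2/t)=\tfrac12 f(t)$ gives $K(t)^2 = (t/\gamma)\log(2/f(t))$, i.e. $K(t)=\sqrt{(t/\gamma)\log(2/f(t))}$, hence
\[
	f'(t)\le -\frac{\alpha}{2}\,\gamma^{\delta/2}\,\frac{f(t)}{\bigl(t\log(2/f(t))\bigr)^{\delta/2}}
	\qquad\text{for all large }t.
\]
The admissibility $K(t)\le bt$ is automatic for large $t$ since $K(t)=O(\sqrt{t\log t})$ (using $f(t)$ bounded away from $0$ is \emph{not} available, but $\log(2/f(t))$ grows at most like the exponent we are trying to prove, so one closes this with a bootstrap); the lower constraint $K(t)\ge a$ holds as long as $f(t)\le 2\e^{-\gamma a^2/t}$, which again holds eventually because $f(t)\to 0$ — and that $f(t)\to0$ follows since $f$ is bounded, decreasing for large $t$, and any positive limit would make the right-hand side of the displayed inequality bounded away from $0$, forcing $f'\to$ a negative constant, a contradiction.

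Next I would integrate the autonomous inequality. Write $g(t):=\log(1/f(t))\to+\infty$; since $f'/f = -g'$, the inequality becomes
\[
	g'(t)\ge \frac{c_1}{\bigl(t\,(g(t)+\log 2)\bigr)^{\delta/2}}\ge \frac{c_2}{t^{\delta/2}\,g(t)^{\delta/2}}
\]
for $t$ large, with $c_1,c_2>0$ depending on $\alpha,\gamma,\delta$. Thus $g(t)^{\delta/2}g'(t)\ge c_2 t^{-\delta/2}$, and integrating from some $t_0$ to $t$ yields
\[
	g(t)^{1+\delta/2}\ge \frac{1+\delta/2}{1-\delta/2}\,c_2\,\bigl(t^{1-\delta/2}-t_0^{1-\delta/2}\bigr) + g(t_0)^{1+\delta/2},
\]
using $\delta<2$ so that the exponent $1-\delta/2$ is positive and the integral $\int t^{-\delta/2}\,dt$ is a positive power of $t$. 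Therefore $g(t)\ge c_3\, t^{(1-\delta/2)/(1+\delta/2)}$ for large $t$, i.e. $g(t)\ge c_3\,t^{(2-\delta)/(2+\delta)} = c_3\,t^{\nu}$. Since $\log f(t) = -g(t)$, this gives $\limsup_{t\to\infty} t^{-\nu}\log f(t)\le -c_3<0$, which is the claim.

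The main obstacle is the bookkeeping around the admissibility of the choice $K(t)$ and the justification that $f(t)\to 0$: the lemma's hypothesis only gives the differential inequality, not monotonicity or a rate a priori, so one must argue (i) $f$ is eventually nonincreasing — which follows because $f'(t)\le 0$ whenever $f(t)\ge \exp(-\gamma(bt)^2/t)=\e^{-\gamma b^2 t}$, true for all large $t$ once we know $f$ doesn't vanish; (ii) $f(t)\to0$; and then (iii) that the optimal $K(t)$ stays in $[a,bt]$. A clean way to package (i)–(iii) is a two-step bootstrap: first use the crude bound $f'(t)\le \alpha(bt)^{-\delta}\e^{-\gamma b^2 t}$ already noted in the excerpt to get $f(t)\to f_\infty$; rule out $f_\infty>0$ by the contradiction above; then once $f(t)\to0$, the choice $K(t)=\sqrt{(t/\gamma)\log(2/f(t))}$ is eventually admissible and the integration argument runs. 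Everything else is a routine ODE comparison.
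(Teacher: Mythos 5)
Your strategy is genuinely different from the paper's, and it does reach the right exponent. The paper makes a single \emph{deterministic} choice $K:=t^{\beta/2}$ with $\beta=4/(2+\delta)$, so that $K^2/t=t^\nu$ and $K^\delta=t^{2\delta/(2+\delta)}$; this turns \eqref{eq:DE} into the linear inequality $f'(t)+\alpha t^{-2\delta/(2+\delta)}f(t)\le\alpha\e^{-\gamma t^\nu}$, which is then killed by the integrating factor $\e^{\theta t^\nu}$ for $\theta<\min(\gamma,\alpha/\nu)$. You instead choose $K$ \emph{adaptively}, solving $\exp(-\gamma K^2/t)=\tfrac12 f(t)$, which produces the nonlinear autonomous inequality $g'\ge c\,t^{-\delta/2}g^{-\delta/2}$ for $g=\log(1/f)$ and recovers $g(t)\gtrsim t^{(1-\delta/2)/(1+\delta/2)}=t^\nu$ by direct integration. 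Your computation of the exponent is correct, and the argument that $f(t)\to0$ (a positive limit would force $f'\le -ct^{-\delta/2}$ with $\delta<2$, hence $\int f'=-\infty$) is sound. What the adaptive choice buys is a transparent explanation of \emph{why} $\nu=(2-\delta)/(2+\delta)$ is the natural rate; what the paper's choice buys is that admissibility of $K$ and the subsequent ODE analysis become trivial.

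The one step in your write-up that does not hold as stated is the upper admissibility $K(t)\le bt$. You assert $K(t)=O(\sqrt{t\log t})$, which would require $\log(2/f(t))=O(\log t)$; no such upper bound on $\log(1/f)$ is available (the lemma only asserts, and you are only proving, a \emph{lower} bound on $\log(1/f)$, and \emph{a priori} $f$ could decay super-exponentially on some time set). The correct repair is the observation that $K(t)>bt$ is \emph{equivalent} to $f(t)<2\e^{-\gamma b^2 t}$, i.e.\ the choice is inadmissible only at times where the desired conclusion already holds with room to spare (since $\nu\le1$). One then integrates $g^{\delta/2}g'\ge c\,t^{-\delta/2}$ only over the ``good'' interval $(s,t]$, where $s$ is the last bad time before $t$ (with $g(s)\ge\gamma b^2s-\log2$ by continuity), and checks the two cases $s\ge t/2$ and $s<t/2$ separately; one also needs the crude bound $f'(t)\le\alpha(bt)^{-\delta}\e^{-\gamma b^2t}$ to control the possible increase of $f$ at bad times. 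This is all doable, but it is exactly the bookkeeping that the paper's nonadaptive choice of $K$ avoids entirely.
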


\begin{proof}
	Define $\beta:=4/(2+\delta)$ and observe that 
	$\beta\in(1\,,2]$ since $\delta\in[0\,,2)$. We appeal to \eqref{eq:DE}
	with $K:= t^{\beta/2}$ in order to see that every 
	$f\in\F(\alpha\,,\delta\,,\gamma)$
	satisfies
	\begin{equation}\label{eq:DE1}
		f'(t) +  \alpha t^{-2\delta/(2+\delta)}f(t) \le \alpha \exp\left( -\gamma 
		t^\nu\right),
	\end{equation}
	uniformly for all $t$ sufficiently large. Define
	\begin{equation}
		g(t) := \exp\left( \theta t^\nu\right) f(t)\qquad[t\ge 0],
	\end{equation}
	where $\theta$ is a fixed parameter that satisfies
	\begin{equation}
		0 < \theta < \min\left( \gamma\,, \frac\alpha\nu\right).
	\end{equation}
	Then, \eqref{eq:DE1} ensures that $g$ satisfies
	\begin{equation}\begin{split}
		g'(t) &= \exp\left( \theta t^\nu\right) \left[
			f'(t) + \theta\nu t^{-2\delta/(2+\delta)} f(t)\right]\\
		&<  \alpha\exp\left( -[\gamma-\theta] t^\nu\right),
	\end{split}\end{equation}
	for all $t$ sufficiently large. This implies that $g$ is bounded,
	which is another way to state the lemma.
\end{proof}

The preceding proof works also when $\delta=2$, and shows that  in that case
every function $f\in \F(\alpha\,,2\,,\gamma)$ is bounded
for every $\alpha,\gamma>0$. But this is vacuous, as we have seen already. 

Next we study the case that $\delta=2$ more carefully and show among other
things that 
if $f\in\F(\alpha\,,2\,,\gamma)$ for some $\alpha,\gamma>0$, then $f(t)$ tends to $0$
faster than any negative power of $\log t$ as $t\to\infty$.

\begin{lemma}\label{lem:delta=2}
	For every $\alpha,\gamma>0$ and $f\in\F(\alpha\,,2\,,\gamma)$,
	\begin{equation}
		\limsup_{t\to\infty} \frac{\log f(t)}{\left(\log t\right)^{1/2}} <0.
	\end{equation}
\end{lemma}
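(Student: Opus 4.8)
The plan is to run the same integrating--factor argument that proved Lemma~\ref{lem:delta<2}, but now with the exponential weight $\exp\bigl(\theta(\log t)^{1/2}\bigr)$ in place of $\exp(\theta t^\nu)$. The only genuinely new ingredient is the choice of the free parameter $K$ in the optimization \eqref{eq:DE}: rather than a pure power of $t$, I would take
\[
	K^*(t):=\left(\frac{2\alpha t(\log t)^{1/2}}{\theta}\right)^{1/2},
\]
where $\theta>0$ is to be fixed below. Note that $K^*(t)\in[a,bt]$ for all $t$ large enough, since $K^*(t)\to\infty$ while $(K^*(t))^2=2\alpha t(\log t)^{1/2}/\theta\ll t^2$. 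The point of this calibration is that the ``gain'' $\alpha/(K^*(t))^2=\theta/\bigl(2t(\log t)^{1/2}\bigr)$ is exactly the logarithmic derivative of the weight $\exp\bigl(\theta(\log t)^{1/2}\bigr)$, whereas the ``error'' becomes $\exp\bigl(-\gamma(K^*(t))^2/t\bigr)=\exp\bigl(-(2\alpha\gamma/\theta)(\log t)^{1/2}\bigr)$, itself exponentially small in $(\log t)^{1/2}$.

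Feeding $K=K^*(t)$ into \eqref{eq:DE} then gives, for all $t$ large,
\[
	f'(t)+\frac{\theta}{2t(\log t)^{1/2}}\,f(t)\le\frac{\theta}{2t(\log t)^{1/2}}\exp\!\left(-\frac{2\alpha\gamma}{\theta}(\log t)^{1/2}\right).
\]
Setting $g(t):=\exp\bigl(\theta(\log t)^{1/2}\bigr)f(t)$ and differentiating, the left-hand side above is precisely $\exp\bigl(-\theta(\log t)^{1/2}\bigr)g'(t)$, so
\[
	g'(t)\le\frac{\theta}{2t(\log t)^{1/2}}\exp\!\left(\left(\theta-\frac{2\alpha\gamma}{\theta}\right)(\log t)^{1/2}\right).
\]
Provided $0<\theta<\sqrt{2\alpha\gamma}$, the exponent $\theta-2\alpha\gamma/\theta$ is strictly negative, and the right-hand side is integrable on $[t_0,\infty)$: the substitution $r=(\log t)^{1/2}$ turns $\int^\infty \bigl(t(\log t)^{1/2}\bigr)^{-1}\exp(-\eta(\log t)^{1/2})\,\d t$ into a constant multiple of $\int^\infty e^{-\eta r}\,\d r$, with $\eta:=2\alpha\gamma/\theta-\theta>0$. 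Hence $g$ is bounded, i.e.\ $f(t)\le C\exp\bigl(-\theta(\log t)^{1/2}\bigr)$ for $t$ large, which is exactly the assertion of the lemma for every admissible $\theta$.

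The one delicate point is the calibration of $K^*$: it must be chosen so that the coefficient of $f(t)$ produced by \eqref{eq:DE} matches the logarithmic derivative of the chosen weight \emph{and} so that the resulting error rate $2\alpha\gamma/\theta$ dominates $\theta$; these two requirements together force the constraint $\theta^2<2\alpha\gamma$. Any $\theta$ strictly below that threshold already delivers a strictly negative limsup, which is all that is claimed — whether one can push $\theta$ up to or past $\sqrt{2\alpha\gamma}$ is irrelevant to the qualitative statement. The remaining verifications, namely that $K^*(t)\in[a,bt]$ eventually and that the error integral converges, are routine.
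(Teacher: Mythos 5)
Your proposal is correct and follows essentially the same route as the paper: the paper's (sketched) proof also takes $K$ proportional to $t^{1/2}(\log t)^{1/4}$ and the integrating factor $\exp\bigl(\theta\sqrt{\log t}\bigr)$ for small $\theta$, and your $K^*$ differs from that choice only by the constant $(2\alpha/\theta)^{1/2}$. Your calibration of the constant and the explicit threshold $\theta<\sqrt{2\alpha\gamma}$ simply make precise the ``routine details'' the paper leaves to the reader.
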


\begin{proof}
	The argument is similar to the proof of Lemma \ref{lem:delta<2},
	but we need to make a few modifications. Specifically, we
	now use $K:= t^{1/2}(\log t)^{1/4}$, and 
	$g(t) :=\exp\{\theta\sqrt{\log t}\}f(t)$ for a sufficiently small
	constant $\theta>0$. The remaining details are routine and left to 
	the interested reader.
\end{proof}

\section{Proof of Theorem \ref{th:main}}

The proof is split into separate parts. First, let us
define $\{p_t\}_{t\ge0}$ to be the transition functions
of the underlying walk $X$; that is,
\begin{equation}
	p_t (x) := \P\{X_t=x\}\qquad\text{for all $t\ge0$ and $x\in\Z^d$}.
\end{equation}
These functions play a role in our analysis, since the solution
$u$ to \eqref{SHE} can be written in the following integral form:
\begin{equation}\label{mild}
	u_t(x) = c_0p_t(-x) + \sum_{y\in\Z^d}\int_0^t p_{t-s}(y-x)
	\sigma(u_s(y))\,\d B_s(y);
\end{equation}
see Shiga and Shimizu \cite{ShigaShimizu}. Now we proceed with the proof,
which is split into a number of distinct steps.

\subsection{Proof in recurrent dimensions}
	We begin by proving \eqref{eq:quant};
	Theorem \ref{th:main} follows immediately in recurrent
	dimensions; that is, when $d\in\{1\,,2\}$.
	
	The proof in recurrent dimensions proceeds by estimating
	fractional moments of $m_t(\lambda)$; see Chapter XII
	of Liggett \cite{Liggett} for similar ideas in the context of discrete
	particle systems and Mueller and Tribe \cite{MuellerTribe} in the
	context of continuous systems.
	
	As was mentioned in the Introduction, 
	it is well known that $\{m_t(\lambda)\}_{t\ge0}$ is a continuous 
	$L^2(\P)$-martingale
	with $\E[m_t(\lambda)]=c_0$ for all $t\ge0$. This is obtained by summing \eqref{mild}
	over $x\in\Z^d$ on both  sides in order to see that
	\begin{equation}\label{M:int}
		m_t(\lambda) = c_0 + \lambda\sum_{y\in\Z^d}\int_0^t \sigma(u_s(y))\, \d B_s(y)
		\qquad[t\ge0].
	\end{equation}
	Because $\sigma(0)=0$ [see \eqref{sig}], it follows that 
	\begin{equation}\label{lipsig}
		|\sigma(z)|\le \lip_\sigma|z|\qquad
		\text{for all $z\in\R$.}
	\end{equation}
	Therefore,
	the exchange of summation and stochastic integration is a standard consequence
	of measurability and the fact that 
	\begin{equation}\label{M:M}
		\sum_{y\in\Z^d}\int_0^t
		\E(|\sigma(u_s(y))|^2)\,\d s\le \lip_\sigma^2\sum_{y\in\Z^d}\int_0^t
		\E(|u_s(y)|^2)\,\d s<\infty,
	\end{equation}
	for all $t>0$. See (2.14) of Shiga and Shimizu \cite{ShigaShimizu}
	for a qualitative statement and Lemma 8.1 of
	Georgiou et al \cite[Lemma 8.1]{GJKS} for a
	sharp quantitative version. 
		
	Because of \eqref{M:M} and the It\^o isometry, 
	$m(\lambda):=\{m_t(\lambda)\}_{t\ge0}$ is also 
	an $L^2(\P)$-martingale,
	and the quadratic variation process of $m(\lambda)$ is described by
	\begin{equation}\label{<m(lambda)>}
		\langle m(\lambda) \rangle_t = 
		\lambda^2\sum_{y\in\Z^d}\int_0^t |\sigma(u_s(y))|^2\, \d s
		=\lambda^2\int_0^t \|\sigma\circ u_s\|_{\ell^2(\Z^d)}^2\,\d s.
	\end{equation}
	Therefore, \eqref{sig} and \eqref{lipsig} together yield the 
	a.s.\ inequalities,
	\begin{equation}\label{<m(lambda)>:bis}
		\lambda^2{\rm L}_\sigma^2 \int_0^t \|u_s\|_{\ell^2(\Z^d)}^2\d s\le
		\langle m(\lambda) \rangle_t  \le 
		\lambda^2 \lip_\sigma^2\int_0^t\|u_s\|_{\ell^2(\Z^d)}^2\d s,
	\end{equation}
	valid for all $t>0$.
	
	Since $m_t(\lambda)\ge u_t(0)$, Eq.\ (7.2) of Georgiou
	et al \cite{GJKS} guarantees that for every $T>0$ there exists $C_T:=C_T(\lambda)
	\in(0\,,\infty)$
	such that
	\begin{equation}
		\P\left\{  \inf_{t\in[0,T]}m_t(\lambda) <\varepsilon \right\} \le C_T 
		\varepsilon^{\log\log (1/\varepsilon)/C_T}\quad\text{for all
		$\varepsilon\in(0\,,1)$}.
	\end{equation}
	This shows in particular that 
	$\sup_{t\in[0,T]} | m_t(\lambda)|\in L^p(\P)$
	for all $p\in (-\infty,\infty),\\T\in(0\,,\infty)$.
	Consequently, we may  apply It\^o's formula
	to see that for all $\eta\in(0\,,1)$,
	\begin{align}
		&[m_t(\lambda)]^\eta\\\notag
%		&= c_0^\eta + \eta\int_0^t 
%			[m_s(\lambda)]^{\eta-1}\,\d m_s(\lambda) -
%			\frac{\eta(1-\eta)}{2}\int_0^t [m_s(\lambda)]^\eta 
%			\frac{\d\langle m(\lambda)\rangle_s}{[m_s(\lambda)]^2}\\\notag
		&=c_0^\eta + \eta\int_0^t [m_s(\lambda)]^{\eta-1}\,\d m_s(\lambda) -
			\frac{\lambda^2\eta(1-\eta)}{2}\int_0^t [m_s(\lambda)]^\eta \frac{\|\sigma\circ
			u_s\|_{\ell^2(\Z^d)}^2}{\|
			u_s\|_{\ell^1(\Z^d)}^2}\,\d s,
	\end{align}
	almost surely, where the stochastic integrals are bona fide continuous
	$L^2(\P)$-martingales. In particular,
	\begin{equation}\label{df}
		\E\left([m_t(\lambda)]^\eta\right) = c_0^\eta - 
		\frac{\lambda^2\eta(1-\eta)}{2}\int_0^t \E\left([m_s(\lambda)]^\eta \frac{\|\sigma\circ
		u_s\|_{\ell^2(\Z^d)}^2}{\|
		u_s\|_{\ell^1(\Z^d)}^2}\right)\d s,
	\end{equation}
	for every $t>0$ and $\eta\in(0\,,1)$. The preceding is true also for
	$\eta\ge1$, but we care only about values of $\eta$ in $(0\,,1)$.
	
	Because of \eqref{sig},
	$\|\sigma\circ u_s\|_{\ell^2(\Z^d)}\ge
	{\rm L}_\sigma\|u_s\|_{\ell^2(\Z^d)}.$
	Therefore, the nonrandom function
	$t\mapsto\E([m_t(\lambda)]^\eta)$ is continuously
	differentiable and solves
	\begin{equation}\label{df3}
		f'(t) \le  - \frac{\lambda^2\eta(1-\eta){\rm L}_\sigma^2}{2} \E\left(
		[m_t(\lambda)]^\eta R_t^2\right)
		\qquad\text{for all $t>0$},
	\end{equation}
	where
	\begin{equation}
		R_t := \frac{\|u_t\|_{\ell^2(\Z^d)}}{\|
		u_t\|_{\ell^1(\Z^d)}}.
	\end{equation}
	
	For every real number $K\ge 1$, let 
	\begin{equation}\label{eq:B(K)}
		\mathcal{B}(K):=\{x\in\Z^d:\, \|x\|\le K\}.
	\end{equation}
	There exists a positive and finite constant $c:=c(d)$
	such that the cardinality of $\mathcal{B}(K)$ is at least $c^{-1} K^d$, uniformly for
	all $K\ge 1$. Therefore,
	by the Cauchy--Schwarz inequality, 
	\begin{equation}\begin{split}
		\|u_t\|_{\ell^2(\Z^d)}^2 &\ge \sum_{x\in\mathcal{B}(K)} [u_t(x)]^2\\
		&\ge cK^{-d} \left(\sum_{x\in\mathcal{B}(K)}u_t(x)\right)^2\\
		&= cK^{-d}\left( \|u_t\|_{\ell^1(\Z^d)} - \sum_{x\not\in\mathcal{B}(K)}
			 u_t(x)\right)^2.
	\end{split}\end{equation}
	for every $t,K>0$.  Consequently,
	\begin{equation}\label{lowerR}\begin{split}
		R_t^2 &\ge cK^{-d}\left( 1 -  \frac{\sum_{x\not\in\mathcal{B}(K)}u_t(x)}{
			\sum_{x\in\Z^d} u_t(x)}\right)^2\\
		&\ge cK^{-d}\left( 1 - 2 \frac{\sum_{x\not\in\mathcal{B}(K)}u_t(x)}{
			\sum_{x\in\Z^d} u_t(x)}\right),
	\end{split}\end{equation}
	and hence \eqref{df3} implies that
	\begin{equation}\begin{split}
		f'(t) &\le - \frac{c\lambda^2\eta(1-\eta){\rm L}_\sigma^2}{2K^d}\left( f(t) - 2\E\left[
			\frac{\sum_{x\not\in\mathcal{B}(K)}u_t(x)}{%
			\left(\sum_{x\in\Z^d} u_t(x)\right)^{1-\eta}}\right]\right)\\
		&\le - \frac{c\lambda^2\eta(1-\eta){\rm L}_\sigma^2}{2K^d}\left( f(t) - 2\E\left[
			\left(\sum_{x\not\in\mathcal{B}(K)}u_t(x)\right)^{\eta}\right]\right);
	\end{split}\end{equation}
	the last line holds
	merely because 
	\begin{equation}
		\left\{\sum_{x\not\in\mathcal{B}(K)}u_t(x)\right\}^{1-\eta}\le
		\left\{\sum_{x\in\Z^d}u_t(x) \right\}^{1-\eta}.
	\end{equation}
	By Jensen's inequality and Lemma \ref{lem:tails}, 
	we can find $c\in(0\,,\infty)$ such that
	\begin{equation}\begin{split}
		\E\left[\left(\sum_{x\not\in\mathcal{B}(K)}u_t(x)\right)^{\eta}\right]
		 	&\le\left(\E\left[\sum_{x\not\in\mathcal{B}(K)}u_t(x)\right]\right)^\eta\\
		&= \left( \sum_{x\not\in\mathcal{B}(K)} c_0p_t(x)\right)^\eta\\
		&\le (2c_0d)^\eta \exp\left(-c\eta K^2/t\right),
	\end{split}\end{equation}
	uniformly for all $K\in[1\,,t]$. Therefore,
	\begin{equation*}
		f'(t) \le - \frac{c\lambda^2\eta(1-\eta){\rm L}_\sigma^2}{2}\sup_{K\in[1,t]}
			\left( \frac{f(t) - 2(2c_0d)^\eta
			\exp\left(-c\eta K^2/t\right)}{K^d}\right)
			\end{equation*}
			and so with $C=(2(2c_0d)^\eta)^{-1},$ we have		
	\begin{equation}
		Cf'(t)\le - \frac{ c\lambda^2
			\eta(1-\eta){\rm L}_\sigma^2}{2}\sup_{K\in[1,t]}
			\left( \frac{Cf(t) -  \exp\left(-c\eta K^2/t\right)}{K^d}\right),
	\end{equation}
	uniformly for all $t\ge1$. In other words, $Cf$ is an element of 
	$\F(\alpha\,,d\,,c\eta)$, where 
	$\alpha:= c\lambda^2\eta(1-\eta){\rm L}_\sigma^2/2$.
	Because of this fact, we may employ Lemmas \ref{lem:delta<2}
	and \ref{lem:delta=2} in order to deduce the existence of
	constants $V:=V(\eta\,,\lambda)\in(1\,,\infty),\,v:=v(\eta\,,\lambda)\in(0\,,\infty)$ such that for all $t\ge1$,
	\begin{equation}\label{cases}
		\E\left([m_t(\lambda)]^\eta\right) \le V\times\begin{cases}
			\exp\left(-vt^{1/3}\right)&\text{if $d=1$},\\
			\exp\left( -v\sqrt{\log t}\right)&\text{if $d=2$}.
		\end{cases}
	\end{equation}
	
	If $U_1,\ldots,U_n$ is a nonnegative supermartingale, then
	Doob's inequality tells us that
	$\lambda\P\{\max_{1\le j\le n}U_j>\lambda\}\le\E(U_1)$ for
	all $\lambda>0$. Since
	$\{[m_s(\lambda)]^\eta\}_{s\ge t}$ is a continuous nonnegative supermartingale
	for every fixed $t>0$,
	Doob's inequality and a standard approximation argument together yield
	\begin{equation}
		\P\left\{ \sup_{s\ge t} m_s(\lambda) > a\right\}
		\le a^{-\eta} \E\left([m_t(\lambda)]^\eta\right),
	\end{equation}
	for all $t,a>0$ and $\eta\in(0\,,1)$. 
	When $d=1$, this and \eqref{cases} together imply that
	\begin{equation}\begin{split}
		P_n &:= \P\left\{ \sup_{s\ge n-1}m_s(\lambda) > 
			\exp\left( -v\,n^{1/3}\right)\right\}\\
		&\le V\exp\left( v(\eta n^{1/3}-(n-1)^{1/3})\right),
	\end{split}\end{equation}
	for all integers $n\ge 1$. Since $\sum_{n-1}^\infty P_n<\infty$, the Borel--Cantelli lemma
	implies the existence of an integer-valued random variable $n_0$ such that
	\begin{equation}
		\sup_{s\ge n-1}m_s(\lambda) \le
		\exp\left( -v\,n^{1/3}\right)\qquad\text{for all $n> n_0$ a.s.}
	\end{equation}
	If $t> n_0$ is an arbitrary number, random or otherwise, 
	then we can find a unique integer $n\ge n_0$ such that
	$n-1 \le t\le n$. Then clearly,
	\begin{equation}
		m_t(\lambda) \le \sup_{s\ge n-1}m_s(\lambda)\le
		\exp\left( -v\,n^{1/3}\right)\le
		\exp\left( -v\,t^{1/3}\right)\qquad\text{a.s.}
	\end{equation}
	This inequality yields the first bound in \eqref{eq:quant},
	whence Theorem \ref{th:main} when $d=1$. 
	
	The proof of part 2 of \eqref{eq:quant}
	is essentially the same as the proof of part 1, 
	but when $d=2$ we use the second estimate in
	\eqref{cases} instead of the first one there. This proves Theorem \ref{th:main}
	for $d=2$.\qed

\subsection{Proof in transient dimensions: Existence of a unique phase transition}

In the second step of the proof we show the existence of a unique phase transition.
In principle, the proof is valid regardless of the value of the ambient dimension.
However, it will turn out that the phase transition is nontrivial only when $d\ge 3$.

Let us write the solution to \eqref{SHE} as $u_t(x\,;\lambda)$,
in order to emphasize the dependence of the solution on the size $\lambda$
of the underlying noise. Recall that $\lambda>0$ is a free parameter. Therefore,
the preceding
constructs $\lambda\mapsto u_\bullet(\bullet\,;\lambda)$ as a coupling
of stochastic processes, as well.

According to a comparison theorem of Cox et al
\cite{CFG}, for all integers $N\ge 1$, and all real 
$t>0$, 
\begin{equation}
	\E\exp\left( -\sum_{x\in	\mathcal{B}(N)} u_t(x\,;\bar\lambda)\right) \ge 
	\E\exp\left( -\sum_{x\in \mathcal{B}(N)} u_t(x\,;\lambda)\right),
\end{equation}
as long as $\bar\lambda\ge\lambda>0$. We let $N\uparrow\infty$ ,
and appeal to the monotone convergence theorem, in order to see that
$\E\exp ( -m_t(\bar\lambda) ) \ge \E\exp ( -m_t(\lambda) )$
for all $t>0$,
as long as $\bar\lambda\ge\lambda>0$. Now let $t\to\infty$
in order to deduce from the dominated convergence theorem that
\begin{equation}
	\E\exp ( -m_\infty(\bar\lambda) ) \ge
	\E\exp ( -m_\infty(\lambda) ),
\end{equation}
as long as $\bar\lambda\ge\lambda>0$. In other words,
$\lambda\mapsto\E\exp(-m_\infty(\lambda))$ is nondecreasing.
Thus,
\begin{equation}\label{infsup}
	\lambda_c := \sup\left\{ \lambda>0:\ \E\e^{ -m_\infty(\lambda)}<1
	\right\}=\inf\left\{ \lambda>0:\ \E\e^{ -m_\infty(\lambda)}
	=1\right\},
\end{equation}
where $\inf\varnothing:=+\infty$ and $\sup\varnothing:=0$. By the monotone
convergence theorem,
\begin{equation}
	\lambda_c = \sup\left\{ \lambda>0:\ m_\infty(\lambda)>0
	\text{ with positive probability}\right\},
\end{equation}
from the ``sup'' formula in \eqref{infsup}; also,
\begin{equation}
	\lambda_c = \inf\left\{ \lambda>0:\ m_\infty(\lambda)=0
	\text{ a.s.}\right\},
\end{equation}
from the ``inf'' formula in \eqref{infsup}.
This proves the existence of a unique $\lambda_c\in[0\,,\infty]$
with the properties mentioned in Theorem \ref{th:main}. 
The already-verified portion of the proof 
implies that $\lambda_c=0$ when $d=1,2$.
In the next two parts of the proof will show the nontriviality of $\lambda_c$
in transient dimensions; namely,
that $0<\lambda_c<\infty$
when $d\ge 3$. This endeavor will conclude the proof.
\qed

\subsection{Proof in transient dimensions: Supercritical phase}
In this section we consider only dimensions $d\ge 3$,
and demonstrate that $m_\infty(\lambda)=0$ a.s.
if $\lambda$ is sufficiently large. This immediately proves that 
\begin{equation}\label{lambda_c:finite}
	\lambda_c<\infty.
\end{equation}

We follow carefully Shiga's proof of his Theorem 1.2 
\cite[pp.\ 800--806]{Shiga1992}, keeping track of
the various sums and estimating them by elementary
means in order to find that  for all $\lambda$ sufficiently large
[$\kappa$ small, in the notation of Shiga]
there exists a constant
$c\in(0\,,\infty)$ such that
\begin{equation}\label{asL:P}
	\sup_{x\in\Z^d}
	\P\left\{ u_t(x) > \e^{-t/c} \right\} \le c\e^{-t/c}\qquad\text{for
	all $t\ge 1$}.
\end{equation}

Among other things, this readily implies the following weak formulation
of a ``local extinction result'': 
\begin{equation}\label{asL}
	\liminf_{t\to\infty} \frac{\log u_t(x)}{t} <0\quad\text{a.s.\ for all $x\in\Z^d$}.
\end{equation}
This is another way to say that the ``almost-sure Lyapunov exponent 
of the solution is negative.'' 
When $\sigma(x)=x$ and $\mathcal{G}$ is the discrete Laplacian---that is, when $\tau$ is uniform
distribution on the graph neighbors of the origin in $\Z^d$---\eqref{asL} is known
to hold with a limit in place of a $\liminf$; see Carmona and Molchanov
\cite{CarmonaMolchanov}. The most  complete results, in this case, can
be found in
Carmona et al \cite{CarmonaKoralevMolchanov} and Cranston et al \cite{CMS2002}.
More generally still,
Shiga \cite{Shiga1992}  considered the same class of nonlinear functions $\sigma$
as we do, and established \eqref{asL} with a proper limit in place of a
liminf.

We now suppose that $\lambda$ is large enough to ensure
the validity of \eqref{asL:P}, and
derive \eqref{lambda_c:finite} as follows. Recall $\mathcal{B}(K)$
from \eqref{eq:B(K)} and let $|\mathcal{B}(K)|$ denote its cardinality.
Setting $A(t)=\{\max_{x\in \mathcal{B}(t^2)} u_t(x) > 
	\frac{\eta}{|\mathcal{B}(t^2)|}\},$ we have by Chebyshev's inequality, 
\begin{eqnarray}
\begin{split}
	\P\left\{ m_t(\lambda) > 2\eta\right\}
	= &\P\left\{ \{m_t(\lambda) > 2\eta\}\cap A(t)\right\}
	+ \P\left\{ \{m_t(\lambda) > 2\eta\}\cap A(t)^c\right\}\\
\le& \P\left\{ A(t)\right\}+\P\left\{ \sum_{x\not\in\mathcal{B}(t^2)}u_t(x)>\eta)\right\}\\
\le&\P\left\{ A(t)\right\}+\eta^{-1}\sum_{x\not\in\mathcal{B}(t^2)}E[u_t(x)],
\end{split}
\end{eqnarray}
for all $\eta>0$ and sufficiently large $t>1$.
Since $|\mathcal{B}(t^2)|\sim\text{const}\cdot t^{2d}$
as $t\to\infty$,
Shiga's estimate \eqref{asL:P} ensures that
\begin{equation}
	\P\left\{ \max_{x\in\mathcal{B}(t^2)} u_t(x) > 
	\frac{\eta}{|\mathcal{B}(t^2)|}\right\} =O(t^{2d})\e^{-t/c}\qquad
	\text{as $t\to\infty$},
\end{equation}
whereas \eqref{mild} and Lemma \ref{lem:tails} together ensure that there
exist finite and positive constants $c_1$ and $c_2$ such that
\begin{equation}
	\sum_{x\not\in\mathcal{B}(t^2)}\E\left[ u_t(x)\right]
	= c_0\P\{ \|X_t\|> t^2\}
	\le c_1 \e^{-c_2t},
\end{equation}
for all $t>1$ sufficiently large. Thus,
that $m_t(\lambda)\to 0$ in probability as $t\to\infty$, and hence
$m_\infty(\lambda)=0$ a.s.\
for all $\lambda$ sufficiently large; \eqref{lambda_c:finite} follows.
\qed

\subsection{Proof in transient dimensions: Subcritical phase}
We continue to assume that $d\ge 3$, and
now prove that  $\lambda_c>0.$

Let  $\{X_t'\}_{t\ge0}$ be an independent copy of the continuous-time
random walk $X$ whose generator, we recall, is $\mathcal{G}$,
and define
\begin{equation}
	\Upsilon(0) := \int_0^\infty \P\{X_t=X_t'\}\,\d t.
\end{equation}
This is the total expected local time of the symmetrized walk $X-X'$
at the origin of $\Z^d$. It is well known that $\Upsilon(0)$ is finite because
$X-X'$ is a $d$-dimensional non-trivial random walk and hence
transient; see Chung and Fuchs \cite{ChungFuchs}. In fact, if $r$ is the probability of return to the origin for $X-X'$ then $\Upsilon(0)$ has an exponential distribution with parameter $2(1-r).$

Choose and fix any $\lambda>0$ that satisfies
\begin{equation}\label{l:small}	
	\lambda < \left[\lip_\sigma\sqrt{\Upsilon(0)}\right]^{-1}.
\end{equation}
According to Proposition 8.3 of Georgiou et al \cite{GJKS},
\begin{equation}\label{eq:GJKS:1}
	\sup_{t\ge0}
	\E\left( \left| m_t(\lambda)\right|^2\right) \le
	2c_0^2 \left( \frac{1+\varepsilon}{1-\varepsilon}\right),
\end{equation}
where $0<\varepsilon:=\lambda^2\lip_\sigma^2\Upsilon(0)<1$.
The Paley--Zygmund inequality is the following form
of the Cauchy--Schwarz inequality:
\begin{equation}
	\P \{ W\ge c_0/2\} \ge \frac{c_0^2}{4\E(W^2)},
\end{equation}
valid for every nonnegative
mean-$c_0$ random variable $W\in L^2(\P)$.
We choose
$W:=m_t(\lambda)$ to see that
\begin{equation}
	\delta:=\inf_{t\ge0}\P \{ m_t(\lambda) \ge c_0/2\}>0,
\end{equation}
as long as $\lambda$ satisfies \eqref{l:small}.
Thus, $\P\{m_\infty(\lambda)\ge c_0/2\}\ge\delta>0$ for all
such values of $\lambda$, and hence
$\lambda_c \ge  [\lip_\sigma\sqrt{\Upsilon(0)} ]^{-1}>0$,
as desired.\qed

\subsection{Proof of \eqref{eq:quant:LB}}
We conclude this section by establishing the 
quantitative lower bound \eqref{eq:quant:LB} that is valid in
all dimensions. Throughout this discussion, $\lambda>0$ is
held fixed.

Notice that
\begin{equation}
	\|\sigma\circ u_s\|_{\ell^2(\Z^d)}\le
	\lip_\sigma^2\|u_s\|_{\ell^2(\Z^d)}\le\lip_\sigma^2\|u_s\|_{\ell^1(\Z^d)},
\end{equation}
almost surely for all $s>0$.
Therefore, we may apply \eqref{df} to see that 
\begin{equation}\label{df:LB}
	\frac{\d}{\d t}\E\left([m_t(\lambda)]^\eta\right) 
	\ge -\frac{\lambda^2\eta(1-\eta)\lip_\sigma^2}{2}\E\left([m_t(\lambda)]^\eta\right),
\end{equation}
for all $t>0$ and $\eta\in(0\,,1)$. Choose and fix some $\eta\in(0\,,1)$
in order to see that the function
\begin{equation}
	f(t) := \E ([m_t(\lambda)]^\eta )\qquad[t>0]
\end{equation}
solves the differential inequality,
\begin{equation}
	f'(t) \ge -\frac12\lambda^2\eta(1-\eta)\lip_\sigma^2 f(t),
\end{equation}
for $t>0$, subject to $f(0)=1$. Therefore,
\begin{equation}
	\E\left([m_t(\lambda)]^\eta\right) 
	\ge \exp\left(-\frac{\lambda^2\eta(1-\eta)\lip_\sigma^2}{2}\,t\right),
\end{equation}
for all $t>0$ and $\eta\in(0\,,1)$. We apply the preceding with $t>1$ and
$\eta:=\eta_t:=1/t$ in order to see that
\begin{equation}\begin{split}
	\exp\left( -\frac{\lambda^2\lip^2_\sigma}{2}\right)
		&\le \exp\left(-\frac{\lambda^2\eta_t(1-\eta_t)\lip_\sigma^2}{2}\,t\right)\\
	&\le \E\left([m_t(\lambda)]^{\eta_t}\right)\\
	&\le \E\left([m_t(\lambda)]^{\eta_t};\ m_t(\lambda)\ge\e^{-ct}\right) 
		+ \e^{-c},
\end{split}\end{equation}
for all $c>0$. We apply the preceding with
an arbitrary choice of 
\begin{equation}\label{c}
	c>\tfrac12\lambda^2\lip^2_\sigma.
\end{equation}
Since $\eta_t\in(0\,,1)$, H\"older's inequality yields
\begin{equation}\begin{split}
	\E\left([m_t(\lambda)]^{\eta_t};\ m_t(\lambda)\ge\e^{-ct}\right) 
		&\le \left[\E\left(m_t(\lambda)\right)\right]^{1/t}
		\left[ \P\{m_t(\lambda)\ge\e^{-ct}\}\right]^{(t-1)/t}\\
	&= c_0^{1/t}\left[ \P\left\{m_t(\lambda)\ge\e^{-ct}\right\}\right]^{(t-1)/t}.
\end{split}\end{equation}
In this way we find that, as long as $c$ satisfies \eqref{c},
\begin{equation}\begin{split}
	\P\left\{m_t(\lambda)\ge\e^{-ct}\right\}
		&\ge c_0^{1/(1-t)}\left[\exp\left( -\frac{\lambda^2\lip^2_\sigma}{2}\right)
		- \e^{-c}\right]^{t/(t-1)}\\
	&\to\exp\left( -\frac{\lambda^2\lip^2_\sigma}{2}\right)
		- \e^{-c}>0,
\end{split}\end{equation}
as $t\to\infty$.
This implies  \eqref{eq:quant:LB}.\qed

\section{The stochastic heat equation on the real line}

We conclude this paper by showing how one can adjust our 
methods in order to study continuous
stochastic partial differential equations
[SPDEs]. Indeed, let $\xi:=\{\xi_t(x)\}_{t>0,x\in\R}$ denote a space-time
white noise; that is, a centered generalized Gaussian noise with
covariance measure,
\begin{equation}
	\Cov\left[\xi_t(x)\,,\xi_s(y)\right] = \delta_0(t-s)
	\delta_0(x-y)\qquad(s,t>0,\, x,y\in\R).
\end{equation}
We consider the SPDE,
\begin{equation}\label{cSHE}
	\dot{\psi}_t(x) = \tfrac12 \psi''_t(x) + \sigma(\psi_t(x))\xi_t(x),
\end{equation}
valid for all $t>0$ and $x\in\R$, subject to a non-random initial profile
$\psi_0\in L^\infty(\R),$ with $\psi_0\ge0.$ The nonlinearity $\sigma$
is, as before, a deterministic Lipschitz-continuous function that satisfies
\eqref{sig}. 

It is well known \cite[Ch.\ 3]{Walsh} that the SPDE \eqref{cSHE} 
has a unique continuous [weak]  solution $\psi$ that satisfies
\begin{equation}\label{MOM}
	\sup_{t\in[0,T]}\sup_{x\in\R} \E\left(|\psi_t(x)|^k\right)<\infty,
\end{equation}
for all $T>0$ and $k\ge 1$. That solution $\psi$ is also known to have the
following integral formulation \cite[Ch.\ 3]{Walsh},
\begin{equation}\label{cmild}
	\psi_t(x) = (G_t*\psi_0)(x) + \int_{(0,t)\times\R} G_{t-s}(y-x)
	\sigma(\psi_s(y))\,\xi(\d s\,\d y),
\end{equation}
where the stochastic integral is a Walsh integral \cite[Ch.\ 2]{Walsh}
and $G$ denotes the heat kernel,
\begin{equation}
	G_t(x) := \frac{1}{\sqrt{2\pi t}}\exp\left( - \frac{x^2}{2t}\right)
	\qquad(t>0,\,x\in\R).
\end{equation}
Then we have the following.

\begin{theorem}\label{th:decay:SHE}
	Suppose in addition that: (i) $\limsup_{|x|\to\infty}
	x^{-2}\log\psi_0(x)<0$
	and (ii) $\|\psi_0\|_{L^1(\R)}>0$. Then,
	$\psi_t\in L^1(\R)$ a.s.\ for all $t>0$, and
	\begin{equation}
		\limsup_{t\to\infty} \frac{1}{t^{1/3}}\log \| \psi_t\|_{L^1(\R)} <0
		\qquad\text{a.s.}
	\end{equation}
\end{theorem}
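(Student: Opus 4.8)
To prove Theorem~\ref{th:decay:SHE} I would follow, essentially line by line, the proof of the recurrent-dimensional case of Theorem~\ref{th:main} given in Section~3.1, making the dictionary substitutions: the continuous-time random walk $X$ becomes a standard Brownian motion, $\ell^p(\Z^d)$ becomes $L^p(\R)$, summation over $x\in\Z^d$ becomes integration over $\R$ (with $\sum_yp_{t-s}(y-x)=1$ replaced by $\int_\R G_{t-s}(y-x)\,\d x=1$), and Lemma~\ref{lem:tails} is replaced by a Gaussian tail bound for the heat flow $G_t*\psi_0$. The first step is bookkeeping: hypothesis~(i) (Gaussian decay of $\psi_0$) together with $\psi_0\in L^\infty(\R)$ gives $\psi_0\in L^1(\R)$; the assumptions $\psi_0\ge0$ and $\sigma(0)=0$ give $\psi_t\ge0$ for all $t\ge0$ by comparison with the zero solution, and together with hypothesis~(ii), $\|\psi_0\|_{L^1(\R)}>0$, Mueller's support theorem gives $\psi_t(x)>0$ for every $x\in\R$ when $t>0$. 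Integrating \eqref{cmild} over $x\in\R$, taking expectations (Tonelli, using $\psi_t\ge0$, and $\int_\R G_{t-s}(y-x)\,\d x=1$), and using that the Walsh integral has mean zero, one gets $\E\|\psi_t\|_{L^1(\R)}=\|G_t*\psi_0\|_{L^1(\R)}=\|\psi_0\|_{L^1(\R)}<\infty$; hence $\psi_t\in L^1(\R)$ a.s., which is the first assertion of the theorem.

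Set $m_t:=\|\psi_t\|_{L^1(\R)}$. A stochastic Fubini argument, using again $\int_\R G_{t-s}(y-x)\,\d x=1$, represents $m_t$ as
\[
	m_t=\|\psi_0\|_{L^1(\R)}+\int_{(0,t)\times\R}\sigma(\psi_s(y))\,\xi(\d s\,\d y),
\]
a continuous $L^2(\P)$-martingale with $\langle m\rangle_t=\int_0^t\|\sigma\circ\psi_s\|_{L^2(\R)}^2\,\d s$; by \eqref{sig} one has the a.s.\ bounds ${\rm L}_\sigma^2\int_0^t\|\psi_s\|_{L^2(\R)}^2\,\d s\le\langle m\rangle_t\le\lip_\sigma^2\int_0^t\|\psi_s\|_{L^2(\R)}^2\,\d s$. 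The single analytic input needed to justify all this is that $s\mapsto\int_\R\E[\psi_s(y)^2]\,\d y$ is finite and locally bounded; this is obtained from \eqref{cmild}, the identity $\int_\R G_u(z)^2\,\d z=(4\pi u)^{-1/2}$, and a renewal-type (fractional Gr\"onwall) inequality, the inhomogeneous term being controlled by $\int_\R(G_s*\psi_0)^2\le\|\psi_0\|_{L^\infty(\R)}\|\psi_0\|_{L^1(\R)}$. Granting $\E\langle m\rangle_t<\infty$, I would apply It\^o's formula to the smooth functions $F_\varepsilon(x):=(x+\varepsilon)^\eta$, $\eta\in(0,1)$, $\varepsilon>0$, whose bounded derivatives make both the martingale and the bounded-variation terms legitimate; letting $\varepsilon\downarrow0$ (monotone convergence, $\eta-2<0$) yields the exact analogue of \eqref{df},
\[
	\E\left(m_t^\eta\right)=\|\psi_0\|_{L^1(\R)}^\eta-\frac{\eta(1-\eta)}{2}\int_0^t\E\left(m_s^{\eta-2}\|\sigma\circ\psi_s\|_{L^2(\R)}^2\right)\d s,
\]
so that (as in Section~3.1) $f(t):=\E(m_t^\eta)$ is $C^1$ and satisfies $f'(t)\le-\tfrac12\eta(1-\eta){\rm L}_\sigma^2\,\E(m_t^\eta R_t^2)$, where $R_t:=\|\psi_t\|_{L^2(\R)}/\|\psi_t\|_{L^1(\R)}$.

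From here the argument is the continuous transcription of Section~3.1. Cauchy--Schwarz over $[-K,K]$ gives $R_t^2\ge(2K)^{-1}\bigl(1-2T_K(t)/m_t\bigr)$ with $T_K(t):=\int_{|x|>K}\psi_t(x)\,\d x$; since $m_t^{\eta-1}T_K(t)\le T_K(t)^\eta$ and, by Jensen's inequality, $\E[T_K(t)^\eta]\le\bigl(\int_{|x|>K}(G_t*\psi_0)\bigr)^\eta$, one is left to check that $\int_{|x|>K}(G_t*\psi_0)(x)\,\d x\le C\e^{-cK^2/t}$ for all $K\ge a$ and $t\ge1$; this is the second place hypothesis~(i) enters, as convolving a Gaussian-decaying $\psi_0$ against $G_t$ preserves Gaussian decay, with rate $\asymp1/t$ for $t\ge1$. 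These bounds show that $C'f$ belongs to $\F(\alpha,1,c\eta)$ with $\alpha:=\eta(1-\eta){\rm L}_\sigma^2/4$ and $C':=(2C^\eta)^{-1}$, so Lemma~\ref{lem:delta<2} with $\delta=1$ (whence $\nu=1/3$) yields $V,v>0$ with $\E(m_t^\eta)\le V\e^{-vt^{1/3}}$ for all $t\ge1$. Finally, $\{m_s^\eta\}_{s\ge t}$ is a nonnegative supermartingale, so Doob's maximal inequality gives $\P\{\sup_{s\ge t}m_s>a\}\le a^{-\eta}\E(m_t^\eta)$; taking $t=n-1$ and $a=\e^{-vn^{1/3}}$ makes the resulting probabilities summable in $n$ (because $(n-1)^{1/3}/n^{1/3}\to1>\eta$), so Borel--Cantelli plus sandwiching an arbitrary $t$ between consecutive integers gives $\limsup_{t\to\infty}t^{-1/3}\log\|\psi_t\|_{L^1(\R)}\le-v<0$ a.s.

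The routine steps really are routine once the discrete proof is in hand; I expect the only genuine work to be the analytic input flagged above, namely the uniform-in-$[0,T]$ finiteness of $s\mapsto\int_\R\E[\psi_s(y)^2]\,\d y$ (and, if one wants the $C^1$-regularity of $f$ rather than mere absolute continuity, the continuity in $s$ of $\E(m_s^{\eta-2}\|\sigma\circ\psi_s\|_{L^2(\R)}^2)$). Both are standard SPDE facts---the continuous counterparts of the moment estimates quoted from \cite{GJKS} in the discrete case---and the $\varepsilon$-regularized It\^o argument above is designed precisely so that no separate small-ball or negative-moment estimate for $m_t$ is required, the finiteness of the bounded-variation term in the limit being forced by $f(t)\ge0$.
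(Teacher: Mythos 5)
Your proposal is correct and follows the paper's strategy essentially step for step: positivity via Mueller's comparison, the stochastic-Fubini representation of $\mathcal{M}_t=\|\psi_t\|_{L^1(\R)}$ as a continuous $L^2$-martingale, the fractional-moment differential inequality, the Cauchy--Schwarz localization to $[-K,K]$, Jensen plus the Gaussian decay of $G_t*\psi_0$ (where hypothesis (i) enters), membership of $f(t)=\E(\mathcal{M}_t^\eta)$ in $\F(\alpha,1,\gamma)$ so that Lemma \ref{lem:delta<2} gives the exponent $\nu=1/3$, and finally Doob plus Borel--Cantelli. You deviate in two technical justifications, both to good effect. First, for $\psi_t\in L^1(\R)$ the paper invokes an a priori estimate in the style of Dalang--Mueller, whereas you deduce it from Tonelli and $\E[\psi_t(x)]=(G_t*\psi_0)(x)$, which is cleaner given positivity (though note the stochastic Fubini step on the infinite measure space still needs the second-moment bound you flag, so you have not entirely escaped the a priori estimate). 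Second, and more substantively, the paper justifies It\^o's formula for $x\mapsto x^\eta$ at the origin by citing the negative-moment bound $\E(\sup_{s\le t}\psi_s^{-\mu})<\infty$ of Mueller--Nualart; your $\varepsilon$-regularization with $F_\varepsilon(x)=(x+\varepsilon)^\eta$, followed by monotone convergence and the observation that nonnegativity of $\E[(\mathcal{M}_t+\varepsilon)^\eta]$ forces finiteness of the limiting bounded-variation term, avoids that input entirely and is more self-contained. The only price is that $f$ comes out absolutely continuous rather than manifestly $C^1$, which, as you note, is harmless since Lemma \ref{lem:delta<2} only uses the differential inequality almost everywhere.
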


\begin{proof}
	By Mueller's comparison theorem \cite{Mueller1,Mueller2},
	$\psi_t(x)\ge0$ for all $t\ge0$ and $x\in\R$ off a single null set.
	Therefore, 
	\begin{equation}
		\mathcal{M}_t := \|\psi_t\|_{L^1(\R)}=\int_{-\infty}^\infty\psi_t(x)\,\d x.
	\end{equation}
	
	An \emph{a priori} estimate, similar to those in Dalang and Mueller
	\cite{DalangMueller}, can be used to show that since $\sigma(0)=0$
	and $\psi_0\in L^1(\R)$, $\psi_t\in L^1(\R)$ a.s.\ for all $t>0$.
	Moreover, we can integrate both sides of \eqref{mild} $[\d x]$
	in order to see that $t\mapsto\mathcal{M}_t$ a.s.\ solves
	the following for all $t>0$:
	\begin{equation}\label{Mint}
		\mathcal{M}_t  =\mathcal{M}_0  +
		\int_{(0,t)\times\R}\sigma(\psi_s(y))\,\xi(\d s\,\d y),
	\end{equation}
	with $\mathcal{M}_0:=\|\psi_0\|_{L^1(\R)}$.
	The exchange of the Lebesgue integral and the stochastic integral is
	justified by an appeal to
	a stochastic Fubini theorem \cite[Theorem 2.6, p.\ 296]{Walsh}.
	
	The identity \eqref{Mint} is the continuous analogue of 
	\eqref{M:int}, and shows that, parallel to the discrete setting,
	the total mass $\mathcal{M}$ is a non-negative,
	continuous $L^2(\Omega)$-martingale
	with mean $\mathcal{M}_0$ and quadratic variation,
	\begin{equation}
		\< \mathcal{M}\>_t = \int_0^t\d s\int_{-\infty}^\infty
		|\sigma(\psi_s(y))|^2\d y.
	\end{equation}
	In particular, \eqref{sig} and the Lipschitz continuity of $\sigma$ together
	yield the following: For all $t>0$,
	\begin{equation}
		{\rm L}_\sigma^2\int_0^t\|\psi_s\|_{L^2(\R)}^2\,\d s\le
		\<\mathcal{M}\>_t \le \lip_\sigma^2\int_0^t\|\psi_s\|_{L^2(\R)}^2\,\d s
		\qquad\text{a.s.}
	\end{equation}
	By It\^o's formula, if $\eta\in(0\,,1)$ is non random, then almost surely
	for all $t>0$,
	\begin{equation}
		\mathcal{M}_t^\eta
		= \mathcal{M}_0^\eta + \eta\int_0^t
		\mathcal{M}_s^{\eta-1}\,\d \mathcal{M}_s + 
		\frac{\eta(\eta-1)}{2}
		\int_0^t\mathcal{M}_s^{\eta-2}\d\<\mathcal{M}\>_s.
	\end{equation}
	The appeal to It\^o's formula, and the fact that the preceding stochastic integral
	is a bona fide martingale, both follow immediately from the fact that
	$\E(\sup_{s\in[0,t]}\psi_s^{-\mu})<\infty$ for all $t>0$ and $\mu>0$;
	see Mueller and Nualart \cite{MN}. 
	
	We integrate both sides of
	the preceding display $[\d\P]$---in a similar vein as
	was done for \eqref{df} and \eqref{df3}---in order to obtain the following:
	\begin{equation}
		\frac{\d}{\d t}\E\left(\mathcal{M}_t^\eta\right) 
		\le- \frac{\eta(1-\eta){\rm L}^2_\sigma}{2}\,\E\left( 
		\mathcal{M}_t^\eta\cdot\mathcal{R}_t\right),
	\end{equation}
	where
	\begin{equation}
		\mathcal{R}_s := \frac{\|\psi_s\|_{L^2(\R)}^2}{\|\psi_s\|_{L^1(\R)}^2}
		\qquad(s>0).
	\end{equation}
	Since $\psi_s$ has finite [negative and positive]
 moments of all orders, $\mathcal{R}_s$ does too. 
	Now we choose and fix an arbitrary nonrandom constant $K>0$,
	and argue as in \eqref{lowerR} to see that
	\begin{equation}
		\mathcal{R}_s
		\ge \frac{1}{2K}\left( 1 - \frac{2}{\|\psi_s\|_{L^1(\R)}}
		\int_{|x|>K}\psi_s(x)\,\d x\right),
	\end{equation}
	for all $s\ge0$. In particular,
	\begin{align}\notag
		\frac{\d}{\d t}\E\left(\mathcal{M}_t^\eta\right) &\le 
			- \frac{\eta(1-\eta){\rm L}^2_\sigma}{4K} \,\E\left( 
			\mathcal{M}_t^\eta\right)\\
		&\qquad + \frac{\eta(1-\eta){\rm L}_\sigma^2}{2K}\,
			\E\left( \|\psi_t\|_{L^2(\R)}^{\eta-1}
			\cdot\int_{|x|>K}\psi_t(x)\,\d x\right)
			\label{choochoo}\\\notag
		&\le -\frac{\eta(1-\eta){\rm L}^2_\sigma}{4K}\,\E\left( 
			\mathcal{M}_t^\eta\right)
			+ \frac{\eta(1-\eta){\rm L}_\sigma^2}{2K}\,
			\E\left( \left[\int_{|x|>K}\psi_t(x)\,\d x\right]^\eta\right),
	\end{align}
	since $[\int_{|x|>K}\psi_t(x)\,\d x/\|\psi_t\|_{L^1(\R)}]^{1-\eta}\le 1.$
	In order to estimate the last quantity in the preceding display,
	we appeal to Jensen's inequality:
	\begin{equation}\begin{split}
		\E\left( \left[\int_{|x|>K}\psi_t(x)\,\d x\right]^\eta\right)
			&\le \left[\E\int_{|x|>K}\psi_t(x)\,\d x\right]^\eta\\
		&=\left[ \int_{|x|>K} (G_t*\psi_0)(x)\,\d x\right]^\eta;
	\end{split}\end{equation}
	valid since $\E[\psi_t(x)]= (G_t*\psi_0)(x)$ by \eqref{mild}.
	Now a few lines of elementary calculations show that
	because $\psi_0$ decays as a Gaussian function, we
	can find finite and positive constants $c_1$ and $c_2$---independently
	of $t$---such that
	\begin{equation}
		\E\left( \left[\int_{|x|>K}\psi_t(x)\,\d x\right]^\eta\right)
		\le c_1\e^{-c_2 K^2/t}.
	\end{equation}
	Because of \eqref{choochoo}, this proves that
	\begin{equation}
		f(t) := \E\left( \|\psi_t\|_{L^1(\R)}^\eta\right)
	\end{equation}
	satisfies the pointwise inequality,
	\begin{equation}
		f'(t) \le -\frac{\eta(1-\eta){\rm L}^2_\sigma}{4K} f(t) + 
		\frac{c_1\eta(1-\eta){\rm L}_\sigma^2}{2K}\exp\left(-\frac{K^2}{c_2 t}\right).
	\end{equation}
	Consequently, there exist finite and positive constants $C$, $\alpha$, and
	$\gamma$ such that $f\in\F(\alpha\,,1\,,\gamma)$,
	whence $\log f(t) \le -C t^{1/3}$ for all $t\gg1$, thanks to Lemma
	\ref{lem:delta<2}, and hence that
	\begin{equation}
		\E\left(\|\psi_t\|_{L^1(\R)}^\eta\right) 
		\le C_1\exp\left(-\frac{t^{1/3}}{C_1}\right)
		\qquad\text{for all $t\ge C_1$}.
	\end{equation}
	Since $t\mapsto\|\psi_t\|_{L^1(\R)}^\eta$ is a nonnegative supermartingale---%
	see \eqref{Mint}---we apply Doob's inequality and a Borel--Cantelli argument
	to finish the proof.
\end{proof}

\spacing{.8}\begin{small}

\bigskip

\noindent {\bf Le Chen.} Department of Mathematics, University of Utah,
	Salt Lake City, UT 84112-0090,
	\textcolor{purple}{\textcolor{purple}{\texttt{chenle02@gmail.com}}}\\

\noindent {\bf Michael Cranston.} Department of Mathematics, 
	University of California-Irvine, Irvine, CA 92697-3875,
	\textcolor{purple}{\texttt{mcransto@gmail.com}}\\

\noindent {\bf D. Khoshnevisan.} Department of Mathematics, University of Utah,
	Salt Lake City, UT 84112-0090,
	\textcolor{purple}{\textcolor{purple}{\texttt{davar@math.utah.edu}}}\\

\noindent {\bf Kunwoo Kim.} Department of Mathematics, University of Utah,
	Salt Lake City, UT 84112-0090,
	\textcolor{purple}{\texttt{kkim@math.utah.edu}}
\end{small}

\end{document}